\documentclass{amsart}
\usepackage{geometry}
\usepackage{graphicx}
\usepackage{amsrefs}
\usepackage[colorlinks]{hyperref}
\usepackage{mathpazo}

\newtheorem{theorem}{Theorem}[section]

\newtheorem{lemma}[theorem]{Lemma}
\newtheorem{proposition}[theorem]{Proposition}
\theoremstyle{definition}
\newtheorem{definition}[theorem]{Definition}

\newtheorem{example}[theorem]{Example}
\newtheorem{question}[theorem]{Question}
\numberwithin{equation}{section}

\begin{document}
\title{Multivariate writhe polynomial of multi-virtual knots}
\author{Zhiyun Cheng}
\address{School of Mathematical Sciences, Beijing Normal University, Beijing 100875, China}
\email{czy@bnu.edu.cn}
\subjclass[2020]{57K10}
\keywords{multi-virtual knot, multivariate writhe polynomial}
\begin{abstract}
In this paper, we extend the writhe polynomial invariant from virtual knots to multi-virtual knots. Several questions asked in \cite{KMV2026} have been answered.
\end{abstract}
\maketitle

\section{Introduction}\label{section1}
As an extension of classical knot theory, which studies the embeddings of $S^1$ into $S^3$, or equivalently $S^2\times [0, 1]$, the virtual knot theory \cite{Kau1999} studies the embeddings of $S^1$ into $\Sigma_g\times [0, 1]$ up to isotopy, orientation-preserving automorphisms of $\Sigma_g$ and stabilizations. Here $\Sigma_g$ denotes an orientable closed surface of genus $g$. Very recently, Kauffman introduced a generalized version of virtual knot theory, called multi-virtual knot theory \cite{Kau2025}. Roughly speaking, in contrast to virtual knot diagrams, which contain only one type of virtual crossing, a multi-virtual knot diagram admits several distinct types of virtual crossings. Geometrically, a multi-virtual knot can be considered an embedded $S^1$ in $S^2\times [0, 1]$ with some labeled handles attached, up to the merger of handles of the same type and the corresponding stabilizations. The original motivation of introducing the multi-virtual knot theory comes from the investigation of Penrose evaluation for colorings of (not necessary planar) trivalent graphs. The readers are referred to \cite{Kau2025} for more details.

Since Kauffman introduced virtual knot theory at the end of the last century, virtual knot invariants can be roughly divided into two categories: the first kind mainly come from classical knot invariants with some modifications for virtual knots, such as the knot group, knot quandle, generalized Alexander polynomial, arrow polynomial and Miyazawa polynomial; the other kind originates from some intrinsic properties of virtual knots, which usually vanishes on classical knots. The first virtual knot invariant of this kind is the $2\mathbb{Z}$-valued odd writhe introduced by Kauffman in \cite{Kau2004}. Later, it was generalized to the odd writhe polynomial in \cite{Che2014}. A big family of the second kind of virtual knot invariants are the so-called index type invariants. The main idea of such invariants can be traced back to Turaev's work \cite{Tur2004}. One concrete example is the writhe polynomial $W_K(t)$ introduced in \cite{CG2013}, see also \cites{Dye2013,IKL2013,Kau2013,ST2014}. For more recent developments on this topic, we refer the readers to \cite{Che2021} or \cite{CFGMX2020}.

For multi-virtual knots, generalized version of bracket polynomial, Alexander polynomial, quandle and arrow polynomial have been discussed in \cite{Kau2025}. The quandle idea has recently been systematically developed and investigated by Kauffman, Mukherjee and Vojt\v echovsk\'y in \cite{KMV2026}. In particular, they introduced the notion of operator quandles for multi-virtual knots and defined the operator quandle coloring invariants as well as the operator quandle 2-cocycle invariants. The main aim of this paper is to present a generalization of the writhe polynomial for multi-virtual knots, which is a multivariate polynomial with integer coefficients. Several questions posed in \cite{KMV2026} will be answered by using this new invariant.

The rest of this paper is arranged as follows. In Section \ref{section2}, we briefly review several fundamental concepts in virtual knot theory together with the definition of the writhe polynomial. Section \ref{section3} is devoted to introducing the multivariate writhe polynomial for multi-virtual knots and proving that it is indeed a knot invariant. Several concrete computational examples will be presented. Section \ref{section4} contains some further discussions and more examples.  

\section{Virtual knots and writhe polynomial}\label{section2}
\subsection{Virtual knots}
A virtual knot diagram is a classical knot diagram with some classical crossing points replaced by virtual crossing points. Usually, we use a small circle around the crossing to denote a virtual crossing. We say two virtual knot diagrams are equivalent if they can be connected by a sequence of generalized Reidemeister moves, see Figure \ref{figure1}. Virtual knots are defined as the equivalence classes of virtual knot diagrams modulo generalized Reidemeister moves.

\begin{figure}[h]
\centering
\includegraphics{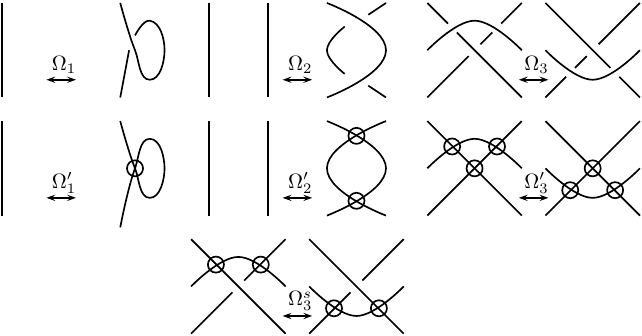}\\
\caption{Generalized Reidemeister moves}\label{figure1}
\end{figure}

Classical knots can be regarded as virtual knots without virtual crossings. It is known that if two classical knot diagrams can be related by finitely many generalized Reidemeister moves, then they also can be related by finitely many classical Reidemeister moves \cite{Kau1999}. In other words, classical knot theory can be embedded in virtual knot theory. From a topological point of view, a virtual knot diagram can be considered as a knot diagram in $S^2$. For each virtual crossing, we may eliminate it by attaching a handle to the 2-sphere, which yields an embedded $S^1$ inside a thickened surface. Now two virtual knots are equivalent if these two embeddings are equivalent modulo isotopy, orientation-preserving automorphisms of the surface and adding or deleting empty handles \cite{CKS2002}. It was proved in \cite{Kup2003} that two virtual knots are equivalent if they are isotopic in the thickened surface with minimal supporting genus. This provides another approach to show that classical knot theory can be regarded as a subset of virtual knot theory.

\subsection{Writhe polynomial}
Roughly speaking, the writhe polynomial of a virtual knot counts the signed sum of indexed classical crossing points. In order to give the definition of writhe polynomial, we need the notion of Gauss diagram. For a given oriented virtual knot diagram $D$, the Gauss diagram $G(D)$ is nothing but the preimage of $D$. It consists of a plane circle oriented in the anticlockwise direction with some oriented and signed chords inside. Here each chord corresponds to a classical crossing point, the orientation is directed from the preimage of the overcrossing to the preimage of the undercrossing, and the sign is exactly that of the crossing. Figure \ref{figure2} depicts a virtual trefoil knot diagram and its corresponding Gauss diagram. Note that all virtual crossings are ignored in the Gauss diagram. One advantage of this is, when we need to check the invariance of something under the generalized Reidemeister moves, we only need to consider the classical Reidemeister moves $\Omega_1, \Omega_2, \Omega_3$, rather than the generalized Reidemeister moves that involve virtual crossings. Although we can not see the virtual crossings in the Gauss diagram, later it will be found that the Gauss diagram indeed contains some information of the virtual crossings.

\begin{figure}[h]
\centering
\includegraphics{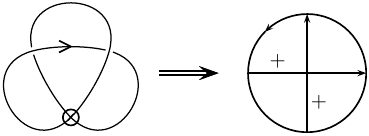}\\
\caption{Virtual trefoil knot and its Gauss diagram}\label{figure2}
\end{figure}

Let $D$ be a virtual knot diagram and $c$ a real crossing point of it. On the premise that no ambiguity will be caused, we will also use $c$ to denote the corresponding chord in the Gauss diagram $G(D)$. For a fixed chord $c$, let us use
\begin{itemize}
  \item $r_+(c)$ to denote the number of positive chords crossing $c$ from left to right;
  \item $r_-(c)$ to denote the number of negative chords crossing $c$ from left to right;
  \item $l_+(c)$ to denote the number of positive chords crossing $c$ from right to left;
  \item $l_-(c)$ to denote the number of negative chords crossing $c$ from right to left.
\end{itemize}
See Figure \ref{figure3}.

\begin{figure}[h]
\centering
\includegraphics[width=3cm]{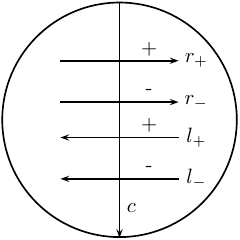}\\
\caption{The definition of the chord index}\label{figure3}
\end{figure}

Now we define the \emph{chord index} of $c$ as
\begin{center}
Ind$(c)=r_+(c)-r_-(c)-l_+(c)+l_-(c)\in\mathbb{Z}$,
\end{center}
and the \emph{writhe polynomial} of $D$ can be defined as
\begin{center}
$W_D(t)=\sum\limits_cw(c)t^{\text{Ind}(c)}-w(D)\in\mathbb{Z}[t^{\pm1}]$.
\end{center}
Here $w(c)$ denotes the sign of $c$ and $w(D)=\sum\limits_cw(c)$ denotes the writhe of $D$. It turns out that the writhe polynomial $W_D(t)$ does not depend on the choice of the knot diagram $D$, therefore it defines a virtual knot invariant \cite{CG2013}. From now on, we will use $W_K(t)$ to denote the writhe polynomial of a virtual knot $K$. Although the calculation of the writhe polynomial is quite simple, the writhe polynomial encodes plenty of useful information of virtual knots. We refer the readers to \cite{CFGMX2020} for more details.

\subsection{Another interpretation of the writhe polynomial}\label{subsection2.3}
In this subsection, we provide another interpretation of the writhe polynomial \cite[Proposition 4.5(3)]{CFGMX2020}, which motivates the definition of the multivariate writhe polynomial of multi-virtual knots given in Section \ref{section3}. The main idea is, instead of counting the contribution from other chords to the chosen chord, one can also consider the contribution coming from virtual chords.

More precisely, let $D$ be an oriented virtual knot diagram and $G(D)$ the corresponding Gauss diagram. Similar to the chords in $G(D)$ which correspond to classical crossing points, we can also add virtual chords which correspond to virtual crossing points. In order to assign an orientation to each virtual chord, we can replace each virtual crossing of $D$ with a positive classical crossing, then the orientation is induced from this positive crossing, i.e. it is directed from the preimage of the overcrossing to the preimage of the undercrossing.

For a fixed (classical) chord $c$, let us use $r_v(c)$ to denote the number of virtual chords crossing $c$ from the left side to the right side. Similarly, we use $l_v(c)$ to denote the number of virtual chords crossing $c$ from the right side to the left side. Now we define the \emph{virtual index} Ind$_v(c)$ of $c$ as 
\begin{center}
Ind$_v(c)=l_v(c)-r_v(c)$.
\end{center}

\begin{lemma}\label{lemma2.1}
The virtual index $\operatorname{Ind}_v(c)$ coincides with the chord index $\operatorname{Ind}(c)$.
\end{lemma}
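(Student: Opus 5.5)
Fix a classical crossing $c$ of $D$. Smoothing $D$ at $c$ in the orientation-respecting way splits the diagram into two oriented closed curves. Call $\gamma_1$ the one traversed from the overcrossing preimage of $c$ to its undercrossing preimage; this is the arc of the circle of $G(D)$ on one side of the chord $c$. Call $\gamma_2$ the complementary arc, closed up at $c$. I will compare the two indices by computing the algebraic intersection number of $\gamma_1$ and $\gamma_2$ in $\mathbb{R}^2$.

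Every crossing of $D$ other than $c$ that lies on both curves corresponds to a chord (classical or virtual) whose endpoints are on opposite arcs. These are exactly the chords crossing $c$ in $G(D)$. The point $c$ itself becomes a tangency after smoothing, so it can be pushed off and contributes nothing. Since $\gamma_1$ and $\gamma_2$ are closed curves in the plane, their algebraic intersection number is zero:
\[
\sum_{x\in \gamma_1\cap\gamma_2}\varepsilon(x)=0 ,
\]
where $\varepsilon(x)=\pm1$ is the local sign of the intersection of $\gamma_1$ with $\gamma_2$ at $x$. These signs ignore over/under information.

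Next, for each crossing $x$ corresponding to a chord crossing $c$, I will express $\varepsilon(x)$ in terms of the Gauss diagram data. Fix the convention that the right side of $c$ is the arc $\gamma_1$. Suppose the chord at $x$ is directed from $\gamma_1$ to $\gamma_2$, i.e.\ it crosses $c$ from right to left, or the reverse. The crossing sign $w(x)$ is determined by the planar orientation of the pair (over strand, under strand). Hence $\varepsilon(x)=w(x)$ when the over strand lies on $\gamma_1$ and $\varepsilon(x)=-w(x)$ otherwise, possibly up to one global sign fixed by convention. For virtual chords, the orientation was defined by resolving into a positive crossing, so $w=+1$ and only the direction matters. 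Summing gives
\[
0=\pm\bigl(l_+(c)-l_-(c)-r_+(c)+r_-(c)+l_v(c)-r_v(c)\bigr)
\]
for an appropriate consistent choice of which side is "left". This means $\operatorname{Ind}(c)=r_+-r_--l_++l_-=l_v-r_v=\operatorname{Ind}_v(c)$.

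The main obstacle is bookkeeping of signs. I must check that the left/right conventions of Figure~\ref{figure3}, the over-to-under direction of chords, and the positive-crossing convention for virtual chords all combine so that classical and virtual chords enter with the relative sign shown above, not the opposite one. I will verify this with a single local picture: a positive crossing whose over strand belongs to $\gamma_1$. That case fixes $\varepsilon$, and the other three cases follow by reversing the over/under choice or the sign. As a sanity check I will run the argument on the virtual trefoil of Figure~\ref{figure2}.
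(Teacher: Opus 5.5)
Your proposal is correct and is essentially the paper's proof. The paper replaces each virtual crossing by a positive classical crossing and cites the well-known fact that every chord of a classical diagram has index zero; your smoothing and algebraic-intersection argument is exactly the standard proof of that fact, with virtual crossings counted as positive crossings, and the sign bookkeeping you flag is harmless because swapping the left/right convention negates both $\operatorname{Ind}(c)$ and $\operatorname{Ind}_v(c)$.
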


\begin{proof}
After replacing each virtual crossing with a positive crossing, we obtain a classical knot diagram. It is well known that for a classical knot diagram, each crossing point has index zero. The result follows from the definition of the chord index and that of the virtual index.
\end{proof}

As a corollary, now the writhe polynomial $W_K(t)$ can be rewritten as 
\begin{center}
$W_K(t)=\sum\limits_cw(c)t^{\operatorname{Ind}_v(c)}-w(K)$.
\end{center}
As an application, if the writhe polynomial has the form $W_K(t)=\sum\limits_{i=m}^na_it^i$ $(a_m\neq0, a_n\neq0)$, then $c_v(K)\geq\max\{|m|, |n|\}$. Here $c_v(K)$ denotes the virtual crossing number of $K$, i.e. the minimal number of virtual crossing points among all the virtual knot diagrams of $K$. It means that, although we cannot see the virtual crossing points in the Gauss diagram, it is still possible to obtain some information of them from the Gauss diagram. We remark that this lower bound of virtual crossing number was first obtained by Satoh and Taniguchi in \cite{ST2014}.

\section{Multi-virtual knots and multivariate writhe polynomial}\label{section3}
\subsection{Multi-virtual knots}
Loosely speaking, a multi-virtual knot diagram contains more than one type of virtual crossings. For simplicity, in this paper we always assume that the set of types of virtual crossings is finite and use $\mathcal{T}=\{\alpha_1, \cdots, \alpha_k\}$ to denote it. Note that a multi-virtual knot diagram does not necessarily contain all types of virtual crossings. Two multi-virtual knot diagrams are equivalent if they are related by a sequence of isotopy, classical Reidemeister moves ($\Omega_1, \Omega_2, \Omega_3$ depicted in Figure \ref{figure1}) and multi-virtual detour moves. Here a multi-virtual detour move is a generalization of the detour move in virtual knot theory, which slides a strand carrying $m$ virtual crossings of the same type through a $(m, n)$-tangle from one side to the other side, resulting in $n$ virtual crossings of the same type between the strand and the tangle. Note that a loop of this strand involving a virtual crossing of the same type can be added or deleted freely. It was proved in \cite[Proposition 2.1]{KMV2026} that the multi-virtual detour move is equivalent to the multi-virtual Reidemeister moves $\Omega^v_1, \Omega^v_2, \Omega^v_3, \Omega^{cv}_3$ depicted in Figure \ref{figure4}. Now we define multi-virtual knots to be the equivalence classes of multi-virtual knot diagrams modulo classical Reidemeister moves $\Omega_1, \Omega_2, \Omega_3$ depicted in Figure \ref{figure1} and multi-virtual Reidemeister moves $\Omega^v_1, \Omega^v_2, \Omega^v_3, \Omega^{cv}_3$ depicted in Figure \ref{figure4}.

\begin{figure}
\centering
\includegraphics{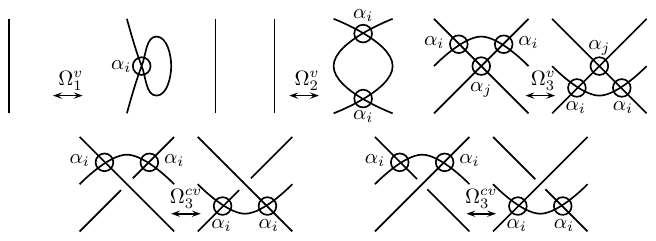}\\
\caption{Multi-virtual Reidemeister moves}\label{figure4}
\end{figure}

For a given multi-virtual knot $K$, the \emph{classical crossing number} (also called \emph{real crossing number}) $c_r(K)$ is the minimal number of classical crossings among all the diagrams representing $K$. Similarly, we define the \emph{virtual crossing number of type} $\alpha_i$ to be the smallest number of virtual crossings of type $\alpha_i$ among all the knot diagrams of $K$, which is denoted by $c_{\alpha_i}(K)$. Following \cite{KMV2026}, the \emph{total crossing number} $c_t(K)$ is defined to be the minimal number of all crossings among all the diagrams. An interesting question is, whether the equality
\begin{center}
$c_t(K)=c_r(K)+\sum\limits_{i=1}^kc_{\alpha_i}(K)$
\end{center}
holds? Obviously, a multi-virtual knot is trivial if and only if $c_t(K)=0$. Later we will find in Section \ref{section4} that, unlike the virtual knot theory, a multi-virtual knot can be nontrivial even if $c_r(K)=0$.

\subsection{Multivariate writhe polynomial}\label{subsection3.2}
In this subsection, we introduce the multivariate writhe polynomial $W_K(t_1, \cdots, t_k)\in\mathbb{Z}[t_1^{\pm1}, \cdots, t_k^{\pm1}]$ and show that it is an invariant of multi-virtual knots. Similar to the writhe polynomial $W_K(t)$, the multivariate writhe polynomial counts the signed sum of classical crossing points where each classical crossing point is equipped with an index.

More precisely, let $D$ be a knot diagram of an oriented multi-virtual knot $K$ and $G(D)$ the Gauss diagram of $D$. Similar to subsection \ref{subsection2.3}, for each virtual crossing of type $\alpha_i$ we add a virtual chord (drawn in dashed line) labeled by $\alpha_i$ to the Gauss diagram $G(D)$. The orientation of a virtual chord coincides with the orientation obtained by converting this virtual crossing into a positive classical crossing. 

Before proceeding further, we pause to point out that in virtual knot theory, the Gauss diagram completely determines the virtual knot rather than the virtual knot diagram. Actually, the generalized Reidemeister moves $\Omega_1', \Omega_2', \Omega_3'$ and $\Omega_3^s$ in Figure \ref{figure1} have no effect on the Gauss diagram. However, for multi-virtual knots, if we equip the Gauss diagram with typed virtual chords, the multi-virtual knot diagram is uniquely determined modulo sliding an arc over the point at infinity. This is because this Gauss diagram now represents a classical knot diagram, thus it completely determines the classical knot diagram on $S^2$ and hence also the multi-virtual knot diagram on $S^2$.

We now proceed to the definition of the multivariate writhe polynomial. For a (classical) chord $c$ of $G(D)$, we set 
\begin{center}
$\operatorname{Ind}_{\alpha_i}(c)=l_{\alpha_i}(c)-r_{\alpha_i}(c)$.
\end{center}
Here $l_{\alpha_i}(c)$ denotes the number of virtual chords of type $\alpha_i$ crossing $c$ from right to left and $r_{\alpha_i}(c)$ denotes the number of virtual chords of type $\alpha_i$ crossing $c$ from left to right.

\begin{definition}
Let $D$ be a multi-virtual knot diagram, we define the \emph{multivariate writhe polynomial} of $D$ to be
\begin{center}
$W_D(t_1, \cdots, t_k)=\sum\limits_{c}w(c)\prod\limits_{i=1}^kt_i^{\operatorname{Ind}_{\alpha_i}(c)}-w(D)$.
\end{center}
\end{definition}

\begin{theorem}\label{theorem3.2}
The multivariate writhe polynomial $W_D(t_1, \cdots, t_k)$ does not depend on the choice of $D$.
\end{theorem}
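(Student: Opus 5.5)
We check that $W_D(t_1, \cdots, t_k)$ is unchanged under each move in the list: the classical moves $\Omega_1, \Omega_2, \Omega_3$ and the multi-virtual moves $\Omega^v_1, \Omega^v_2, \Omega^v_3, \Omega^{cv}_3$ (Figure \ref{figure4}). Invariance under planar isotopy, including sliding an arc over the point at infinity, needs a separate remark. We work throughout with the Gauss diagram $G(D)$ enlarged by the typed, oriented virtual chords. The key observation is that $\operatorname{Ind}_{\alpha_i}(c)$ depends only on the chords meeting $c$ in this enlarged diagram. So a move changes an index only if it creates, deletes, or reorders chord endpoints in a way that alters which type-$\alpha_i$ chords cross $c$, and in which direction.

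\textbf{Moves involving only virtual crossings.}
\begin{itemize}
\item For $\Omega^v_1$, the new virtual chord has adjacent endpoints, so it crosses no classical chord.
\item For $\Omega^v_2$, the two new virtual chords have the same type and parallel endpoints. Converting both virtual crossings to positive classical ones turns them into a positive and a negative crossing, so for any $c$ they cross $c$ in opposite directions. Their contributions to $l_{\alpha_i}(c)-r_{\alpha_i}(c)$ therefore cancel.
\item For $\Omega^v_3$, the three virtual chords only have endpoints permuted locally within short arcs. A classical chord has no endpoints in these arcs, so the set of virtual chords crossing it is unchanged.
\end{itemize}

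\textbf{Mixed move $\Omega^{cv}_3$.} This move slides a strand through two virtual crossings of the same type past a classical crossing $c_0$. Non-involved classical chords are unaffected, as in the previous case. For $c_0$ itself, before the move its endpoints lie on one side of both virtual chords; after the move the endpoint on the moving strand has passed the endpoints of the two virtual chords. Both virtual chords have type $\alpha_i$. Replacing them with positive crossings gives a classical $\Omega_3$ picture, where chord index is preserved. Hence the two chords change their crossing status with $c_0$ in a compensating way, either both starting or both stopping to cross it, with opposite directions. Equivalently, Lemma~\ref{lemma2.1}'s argument shows the net contribution of a strand passing two same-type virtual crossings is zero. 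I expect this case to be the main obstacle: I would draw the local Gauss diagram pieces and check the orientation bookkeeping carefully, including the two versions of the move (strand over or under).

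\textbf{Classical moves.}
\begin{itemize}
\item For $\Omega_1$, the new chord has adjacent endpoints, so all its indices are zero. It contributes $w\cdot 1$, which cancels against its contribution to $w(D)$. Other indices do not change.
\item For $\Omega_2$, the two new chords have opposite signs and crossing sets that are the same apart from each other. Their monomials are therefore equal and cancel in the sum, and they also cancel in $w(D)$. No virtual chord index of other chords changes, because the virtual crossings do not move.
\item For $\Omega_3$, only the order of endpoints of the three classical chords changes locally. Since indices count only virtual chords, and no virtual endpoint lies in the local arcs, all indices and signs are preserved.
\end{itemize}

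\textbf{Point at infinity.} Sliding an arc over the point at infinity does not change the enlarged Gauss diagram, as noted before the definition. This completes the plan.
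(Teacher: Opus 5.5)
Your plan is the paper's own: a move-by-move check on $G(D)$ enlarged by the typed virtual chords. Your treatment of $\Omega_1,\Omega_2,\Omega_3,\Omega^v_1,\Omega^v_2,\Omega^v_3$ is fine. For $\Omega_3$ and $\Omega^v_3$, your remark that endpoints are only transposed inside short arcs carrying no endpoint of the other kind is a cleaner justification than the paper gives. One slip in $\Omega^v_2$: resolving both crossings positively gives two \emph{positive} crossings, not a positive and a negative one. What is true is that the two corners of a bigon have opposite local orientation, so the positive resolution puts \emph{different} strands on top. That is why the two virtual chords are oppositely oriented.

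The genuine problem is in $\Omega^{cv}_3$, the step you flagged. Let $a,b$ be the strands meeting at the classical crossing $c_0$, and let $s$ be the strand carrying the type-$\alpha_i$ crossings $v_1=s\cap a$ and $v_2=s\cap b$. The three local arcs of the circle carry the endpoint pairs $\{c_0,v_1\}$, $\{c_0,v_2\}$, $\{v_1,v_2\}$, and the move transposes each pair. In particular, $c_0$ has no endpoint on the moving strand, contrary to your description, and its crossing status with $v_1$ and with $v_2$ \emph{each} toggles.

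Your dichotomy ``both start or both stop crossing $c_0$, in opposite directions'' does not cover all cases. Smoothing at $c_0$ splits the knot into a loop containing $a_{\mathrm{out}},b_{\mathrm{in}}$ and a loop containing $a_{\mathrm{in}},b_{\mathrm{out}}$. A chord $v_j$ crosses $c_0$ if and only if it joins the two loops. Suppose the triangle lies in the sector between $a_{\mathrm{out}}$ and $b_{\mathrm{out}}$ (e.g.\ $a,b$ both pointing upward, $s$ horizontal above $c_0$). Then $c_0$ crosses exactly one of $v_1,v_2$ before the move and exactly the other one after. Invariance then requires these two to cross $c_0$ in the \emph{same} direction. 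This does hold, because there the positive resolution puts $s$ over both or under both. Your argument does not address this case and in fact asserts a different picture.

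Your fallback is also not right as stated. The positive resolution may put $s$ over $a$ and under $b$ while $a$ is over $b$ at $c_0$ (e.g.\ $a$ upward, $b$ downward, $s$ horizontal pointing right). That is a cyclic triangle, not a Reidemeister move, so you cannot quote invariance of the chord index under $\Omega_3$.

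The clean repair is presumably what your appeal to Lemma~\ref{lemma2.1} intends:
\begin{enumerate}
\item Since $v_1,v_2$ have the same type, only their combined signed contribution to $c_0$ matters.
\item Resolve \emph{every} virtual crossing positively, both before and after the move. Both results are planar classical diagrams, so $c_0$ has chord index $0$ in each.
\item Every chord other than $v_1,v_2$ contributes the same to the index of $c_0$ before and after. Only the pairs $(c_0,v_1),(c_0,v_2),(v_1,v_2)$ change crossing status, and chord orientations are fixed by local tangent directions, which the move preserves.
\item Hence the combined contribution of $v_1,v_2$ is also unchanged, i.e.\ $\operatorname{Ind}_{\alpha_i}(c_0)$ is unchanged.
\end{enumerate}
This needs no case analysis and does not require the resolved move to be legal.

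Alternatively, a crossing's contribution to another chord's index does not depend on how it is resolved. So you may resolve $v_1,v_2$ so that $s$ is on top, which does give a legal $\Omega_3$.

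For comparison, the paper itself only checks the configuration drawn in Figure~\ref{figure8}.
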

\begin{proof}
It suffices to verify that $W_D(t_1, \cdots, t_k)$ is invariant under the classical Reidemeister moves $\Omega_1, \Omega_2, \Omega_3$ depicted in Figure \ref{figure1} and multi-virtual Reidemeister moves $\Omega^v_1, \Omega^v_2, \Omega^v_3, \Omega^{cv}_3$ depicted in Figure \ref{figure4}. Next, we check each of these seven moves one by one.

\begin{itemize}
\item Classical Reidemeister move $\Omega_1$: the newly appeared chord $c$ is isolated, hence no virtual chord has nonempty intersection with it. It follows that $\operatorname{Ind}_{\alpha_i}(c)=0$ for any $1\leq i\leq k$ and its contribution to $W_D(t_1, \cdots, t_k)$ equals the sign of it, which is cancelled out by the writhe of the diagram.
\item Classical Reidemeister move $\Omega_2$: denote the two newly appeared chords by $c_1$ and $c_2$, then a virtual chord has nonempty intersection with $c_1$ if and only if it also has nonempty intersection with $c_2$. Since the orientations of $c_1$ and $c_2$ are almost identical, it follows that $\operatorname{Ind}_{\alpha_i}(c_1)=\operatorname{Ind}_{\alpha_i}(c_2)$. Together with the fact that $w(c_1)+w(c_2)=0$, the result follows immediately.
\item Classical Reidemeister move $\Omega_3$: according to the orientations of the three strands and the external connection configurations of $\Omega_3$, the Gauss diagram $G(D)$ admits several distinct cases. Here we only consider the case depicted in Figure \ref{figure5}. The proofs for the remaining cases follow analogously.

\begin{figure}[h]
\centering
\includegraphics{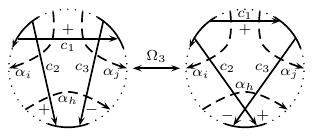}\\
\caption{Behavior of $\Omega_3$ on Gauss diagram}\label{figure5}
\end{figure}

Denote the three classical crossings involving in $\Omega_3$ by $c_1, c_2, c_3$. There are six kinds of virtual chords which may have nonempty intersection with at least one of $\{c_1, c_2, c_3\}$, say $\alpha_i, \alpha_j, \alpha_h$ and their inverses. It is easy to observe that for any virtual chord, the algebraic intersection number between it and $c_s$ $(s\in\{1, 2, 3\})$ is preserved by $\Omega_3$. Note the writhe of the knot diagram is also preserved, it follows that the multivariate writhe polynomial $W_D(t_1, \cdots, t_k)$ is unchanged under $\Omega_3$.
\item Multi-virtual Reidemeister move $\Omega^v_1$: similar to $\Omega_1$, the newly appeared virtual crossing point is isolated, thus it has no contribution to $\operatorname{Ind}_{\alpha_i}(c)$ for any classical crossing $c$.
\item Multi-virtual Reidemeister move $\Omega^v_2$: according to the orientations of the two strands, there are two possible configurations for the Gauss diagram $G(D)$ that contains two new virtual chords, see Figure \ref{figure6}.
\begin{figure}[h]
\centering
\includegraphics{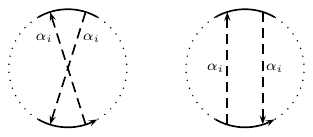}\\
\caption{Two possibilities of the Gauss diagram}\label{figure6}
\end{figure}
In each case, if a (classical) chord has nonempty intersection with one of the newly appeared virtual chord, it must has nonempty intersection with the other one. Due to the opposite orientations of these two virtual chords, for any chord $c$, their contributions to $\operatorname{Ind}_{\alpha_i}(c)$ cancel out. As a corollary, the multivariate writhe polynomial is preserved.

\item Multi-virtual Reidemeister move $\Omega^v_3$: similar to $\Omega_3$, there are several possible configurations for the Gauss diagram. Here we only consider the case depicted in Figure \ref{figure7}, other cases can be verified by a similar argument. The proof of the invariance of the multivariate writhe polynomial under $\Omega^v_3$ can essentially be regarded as the mirror version of that of $\Omega_3$. Here we have two virtual chords of type $\alpha_i$ and one virtual chord of type $\alpha_j$ involved in $\Omega^v_3$. For classical chords that have nonempty intersections with one of them, such as $c_1, c_2, c_3$ depicted in Figure \ref{figure7}, the algebraic intersection number between $\alpha_p$ $(p\in\{i, j\})$ and $c_q$ $(q\in\{1, 2, 3\})$ is invariant under $\Omega^v_3$. Hence $\operatorname{Ind}_{\alpha_p}(c_q)$ is invariant under $\Omega^v_3$, the invariance of the multivariate writhe polynomial follows.                                                                  

\begin{figure}[h]
\centering
\includegraphics{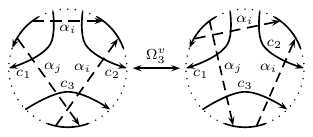}\\
\caption{Behavior of $\Omega^v_3$ on Gauss diagram}\label{figure7}
\end{figure}

\item Multi-virtual Reidemeister move $\Omega^{cv}_3$: consider the two Gauss diagrams depicted in Figure \ref{figure8}, one observes that $\operatorname{Ind}_{\alpha_i}(c)$ is preserved by $\Omega^{cv}_3$. For any other classical chord $c'$, similar as above, it is not difficult to find that $\operatorname{Ind}_{\alpha_i}(c')$ is also invariant under $\Omega^{cv}_3$. As a consequence, the multivariate writhe polynomial remains invariant under the multi-virtual Reidemeister move $\Omega^{cv}_3$.

\begin{figure}[h]
\centering
\includegraphics{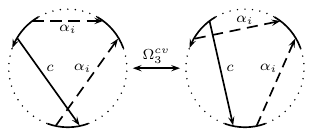}\\
\caption{Behavior of $\Omega^{cv}_3$ on Gauss diagram}\label{figure8}
\end{figure}
\end{itemize}

The proof is finished.
\end{proof}

Next we give some examples of the multivariate writhe polynomial of multi-virtual knots.

\begin{example}\label{example3.3}
Consider the four multi-virtual knots $K_1, K_2, K_3$ and $K_4$ illustrated in Figure \ref{figure9}. Direct calculation shows that
\begin{center}
$W_{K_1}=t_1t_2^{-1}-1, W_{K_2}=-t_1^{-1}t_2+1, W_{K_3}=t_1^{-1}t_2-1, W_{K_4}=-t_1t_2^{-1}+1$.
\end{center}
Thus these four multi-virtual knots are mutually distinct. Let us use $r(K)$ to denote the reverse of $K$, which is obtained from $K$ by reversing the orientation. Then we have
\begin{center}
$W_{r(K_1)}=t_1^{-1}t_2-1, W_{r(K_2)}=-t_1t_2^{-1}+1, W_{r(K_3)}=t_1t_2^{-1}-1, W_{r(K_4)}=-t_1^{-1}t_2+1$.
\end{center}
It follows that none of $K_1, K_2, K_3, K_4$ is invertible.

\begin{figure}[h]
\centering
\includegraphics{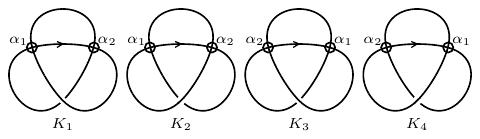}\\
\caption{Four multi-virtual knots}\label{figure9}
\end{figure}
\end{example}

\begin{example}\label{example3.4}
This example is borrowed from \cite[Subsection 6.3]{KMV2026}. Consider the closure of a 2-braid $(\sigma_1)^{2n+1}$, by replacing the first $2n$ classical crossings with virtual crossings of type $\alpha_1$ and type $\alpha_2$ alternatively, we obtain an oriented multi-virtual knot $K(n)$. An example of $K(3)$ can be found in Figure \ref{figure10}. Notice that in the Gauss diagram all virtual chords of type $\alpha_1$ cross the classical chord from the right side to the left side, and all virtual chords of type $\alpha_2$ cross the classical chord from the left side to the right side. It follows that $W_{K(n)}=t_1^nt_2^{-n}-1$.

\begin{figure}[h]
\centering
\includegraphics{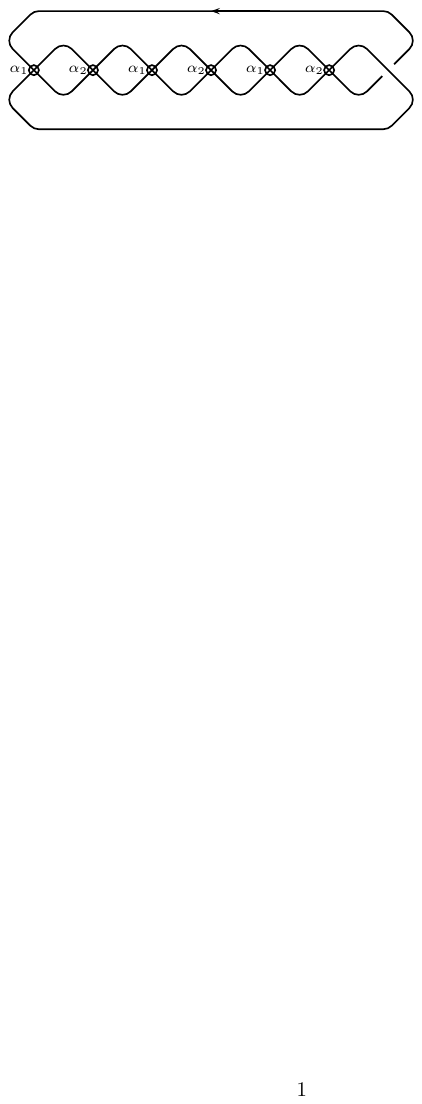}\\
\caption{A multi-virtual knot $K(3)$}\label{figure10}
\end{figure}
\end{example}

Before ending this subsection, we give an alternative interpretation of the multivariate writhe polynomial, which mimics the approach to the odd writhe polynomial given in \cite{Che2014}. Suppose we are given a multi-virtual knot diagram of $K$, say $D$, for a fixed classical crossing point $c$, we assign a $k$-tuple $(0, \cdots, 0)$ to the over-arc of it. Walking along $D$ according to the orientation, Figure \ref{figure11} tells us how to label the next arc when we meet a virtual crossing or a classical crossing. 
\begin{figure}[h]
\centering
\includegraphics{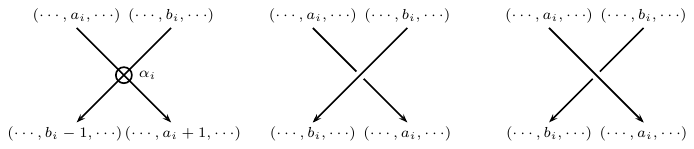}\\
\caption{Labelling rules}\label{figure11}
\end{figure}
It is not difficult to observe that this provides a well-defined labelling for each arc of $D$ via $k$-tuples. If the under-arc of $c$ receives the $k$-tuple $(a_1(c), \cdots, a_k(c))$, then we have
\begin{center}
$W_K(t_1, \cdots, t_k)=\sum\limits_cw(c)\prod\limits_{i=1}^kt_i^{a_i(c)}-w(D)$.
\end{center}
This is due to the fact that $\operatorname{Ind}_{\alpha_i}(c)=a_i(c)$, which can be easily obtained from the definition. Note that it is not necessary to assign $(0, \cdots, 0)$ to the over-arc of $c$, actually one can begin with any $k$-tuple $(a_1, \cdots, a_k)$. If the under-arc of $c$ receives $(b_1, \cdots, b_k)$, then we have $(\operatorname{Ind}_{\alpha_1}(c), \cdots, \operatorname{Ind}_{\alpha_k}(c))=(b_1-a_1, \cdots, b_k-a_k)$. Therefore, fixing a label for any one arc suffices to define the multivariate writhe polynomial.

\subsection{Basic properties of the multivariate writhe polynomial}
In this subsection, we discuss some basic properties of the multivariate writhe polynomial. 

For a multi-virtual knot diagram of $K$, say $D$, there exists a \emph{virtual projection} which maps $D$ to a virtual knot diagram $v(D)$. Here $v(D)$ is obtained from $D$ by forgetting about the type of virtual crossings. It was proved in \cite[Proposition 3.1]{KMV2026} that equivalent multi-virtual knot diagrams have equivalent virtual projections. Thus we are allowed to write $v(K)$ to denote the virtual projection of $K$. The following result shows the relation between the multivariate writhe polynomial of $K$ and the writhe polynomial of $v(K)$. We would like to remark that the writhe polynomial of $v(K)$ is nothing but the writhe polynomial corresponding to the Gauss diagram $G(D)$ without virtual chords.

\begin{proposition}\label{proposition3.5}
Let $K$ be a multi-virtual knot, then $W_{v(K)}(t)=W_K(t, \cdots, t)$.
\end{proposition}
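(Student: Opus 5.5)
Fix a multi-virtual knot diagram $D$ of $K$, and take $v(D)$, obtained by forgetting the types of the virtual crossings, as the diagram of $v(K)$. This is legitimate by \cite[Proposition 3.1]{KMV2026}: $v(K)$ is well defined and $v(D)$ represents it. Both sides of the identity can then be computed from this pair of diagrams, since $W_K$ is an invariant by Theorem \ref{theorem3.2} and $W_{v(K)}$ is an invariant by \cite{CG2013}.

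The classical crossings of $D$ and of $v(D)$ are literally the same, with the same signs, so the Gauss diagrams $G(D)$ and $G(v(D))$ have the same classical chords and $w(D)=w(v(D))$. The virtual chords also agree. Each virtual crossing of $D$ gives one dashed chord in both diagrams, oriented by the same rule (replace the crossing with a positive classical crossing). The only difference is that in $G(D)$ each dashed chord carries a type $\alpha_i$.

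For a classical chord $c$, every virtual chord crossing $c$ has exactly one type. So $l_v(c)=\sum_{i=1}^k l_{\alpha_i}(c)$ and $r_v(c)=\sum_{i=1}^k r_{\alpha_i}(c)$, and therefore
\[
\operatorname{Ind}_v(c)=\sum_{i=1}^k \operatorname{Ind}_{\alpha_i}(c).
\]
By Lemma \ref{lemma2.1}, the left-hand side equals the chord index $\operatorname{Ind}(c)$ computed in $v(D)$.

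Substituting $t_1=\cdots=t_k=t$ into the definition gives
\[
W_D(t,\dots,t)=\sum_c w(c)\,t^{\sum_i \operatorname{Ind}_{\alpha_i}(c)}-w(D)=\sum_c w(c)\,t^{\operatorname{Ind}(c)}-w(v(D))=W_{v(D)}(t),
\]
which is the claim.

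The only point requiring care is Lemma \ref{lemma2.1}. Its proof converts all virtual crossings into positive classical crossings, and we need that this conversion, and hence the orientation of each virtual chord, is the same in $D$ and in $v(D)$. This holds because the orientation convention uses only the local picture of the crossing and ignores its type. I expect no other obstacle.
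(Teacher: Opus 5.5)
Your proposal is correct and is essentially the paper's argument: the paper's one-line proof (``follows directly from Lemma \ref{lemma2.1}'') rests on the same observation that $\operatorname{Ind}_v(c)=\sum_{i=1}^k\operatorname{Ind}_{\alpha_i}(c)$ on the common Gauss diagram of $D$ and $v(D)$, with Lemma \ref{lemma2.1} identifying this with the chord index in $v(D)$. You have simply made the bookkeeping explicit: the same classical chords and signs, the same virtual-chord orientations, and the well-definedness of $v(K)$.
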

\begin{proof}
It follows directly from Lemma \ref{lemma2.1}.
\end{proof}

For a given multi-virtual knot $K$, let us use $r(K)$ to denote the reverse of $K$ as above, and use $m(K)$ to denote the multi-virtual knot obtained from $K$ by switching all classical crossing points. Similar to the writhe polynomial \cite[Proposition 4.5]{CFGMX2020}, the multivariate writhe polynomial is quite sensitive to these two symmetries.

\begin{proposition}\label{proposition3.6}
Let $K$ be a multi-virtual knot, then we have 
\begin{center}
$W_{m(K)}(t_1, \cdots, t_k)=-W_K(t_1^{-1}, \cdots, t_k^{-1})$ and $W_{r(K)}(t_1, \cdots, t_k)=W_K(t_1^{-1}, \cdots, t_k^{-1})$.
\end{center}
\end{proposition}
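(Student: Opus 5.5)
We work directly with a fixed diagram $D$ of $K$ and its Gauss diagram $G(D)$ equipped with typed virtual chords. We track how the signs of classical chords and the orientations of classical and virtual chords change under each symmetry, and how that affects $\operatorname{Ind}_{\alpha_i}(c)=l_{\alpha_i}(c)-r_{\alpha_i}(c)$. By Theorem \ref{theorem3.2} it suffices to compare $W_D$ with $W_{m(D)}$ and $W_{r(D)}$ for this single diagram.

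For $m(K)$: switching every classical crossing of $D$ leaves the underlying curve, the virtual crossings and their types unchanged. Hence the virtual chords of $G(m(D))$ are exactly those of $G(D)$ with the same orientations, since a virtual chord's orientation is defined by replacing it with a positive crossing and this does not involve the classical crossings. Each classical chord $c$ keeps its endpoints but reverses direction (over and under are swapped), and its sign changes, $w(m(c))=-w(c)$. Reversing the direction of $c$ interchanges its left and right sides. So a virtual chord crossing $c$ from right to left now crosses it from left to right, giving $\operatorname{Ind}_{\alpha_i}(m(c))=-\operatorname{Ind}_{\alpha_i}(c)$. The writhe also negates, so
\[
W_{m(D)}(t_1,\dots,t_k)=\sum_c -w(c)\prod_i t_i^{-\operatorname{Ind}_{\alpha_i}(c)}+w(D)=-W_D(t_1^{-1},\dots,t_k^{-1}).
\]

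For $r(K)$: reversing the orientation of $D$ keeps every crossing sign, because a crossing's sign is unchanged when both strands reverse. The classical chords keep their over-to-under direction, but the base circle of the Gauss diagram is now traversed clockwise. Redrawing it counterclockwise amounts to a reflection of the disk, which interchanges left and right for every chord. For a virtual crossing, reversing both strands keeps it positive after resolution, so its over/under preimages, and hence its chord direction, are unchanged. Thus virtual and classical chords keep their directions while the reflection swaps all left/right relations, so $\operatorname{Ind}_{\alpha_i}(c)$ changes sign and $w(c)$ is preserved. This gives $W_{r(D)}=W_D(t_1^{-1},\dots,t_k^{-1})$.

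The main obstacle is the bookkeeping of the left/right convention under reflection of the Gauss circle in the $r(K)$ case. I would double-check it with the labelling interpretation at the end of Subsection \ref{subsection3.2}. Under reversal, walking from the over-arc to the under-arc of $c$ traverses the complementary arc of the circle. The total label change around the whole circle is zero, since the whole circle carries each virtual chord's two endpoints. So the labels collected on the complementary arc are the negatives of those on the original arc, confirming the sign change. As a further check, substituting $t_i=t$ and applying Proposition \ref{proposition3.5} recovers the known identities for $W_{v(K)}$ from \cite{CFGMX2020}. Example \ref{example3.3} gives a concrete sanity check.
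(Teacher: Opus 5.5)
Your argument is correct and follows essentially the same route as the paper. For $m(K)$ the classical chords reverse direction and change sign while the virtual chords are untouched. For $r(K)$ the crossing signs are preserved while every $\operatorname{Ind}_{\alpha_i}(c)$ changes sign. Your reflection-of-the-Gauss-circle argument, together with the observation that virtual chord directions survive reversal, supplies the justification that the paper only asserts in the $r(K)$ case.
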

\begin{proof}
Consider $m(K)$, first notice that after switching all classical crossings, the sign of every classical crossing is changed and hence $w(m(K))=-w(K)$. On the other hand, after switching each classical crossing point, the orientation of each classical chord is reversed, hence for each type $\alpha_i$, the corresponding index $\operatorname{Ind}_{\alpha_i}(c)$ changes sign. Thus we have $W_{m(K)}(t_1, \cdots, t_k)=-W_K(t_1^{-1}, \cdots, t_k^{-1})$.

For $r(K)$, the sign of each classical crossing is preserved, but the sign of $\operatorname{Ind}_{\alpha_i}(c)$ is changed. It follows immediately that $W_{r(K)}(t_1, \cdots, t_k)=W_K(t_1^{-1}, \cdots, t_k^{-1})$.
\end{proof}

Similar to the writhe polynomial, the multivariate writhe polynomial also yields lower bounds for various crossing numbers.

\begin{proposition}\label{proposition3.7}
If the multivariate writhe polynomial of a multi-virtual knot $K$ has the form 
\begin{center}
$W_K(t_1, \cdots, t_k)=\sum m_{a_1\cdots a_k}t_1^{a_1}\cdots t_k^{a_k}$,
\end{center}
where $m_{a_1\cdots a_k}\neq0$, then $c_r(K)\geq\sum\limits_{(a_1, \cdots, a_k)\neq(0, \cdots, 0)}|m_{a_1\cdots a_k}|$ and $c_{\alpha_i}(K)\geq\max\{|a_i|\}$. Here the maximum is taken over all nonzero coefficients.
\end{proposition}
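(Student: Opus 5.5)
The plan is to use Theorem \ref{theorem3.2}, which lets us compute $W_K$ from any diagram. So fix a diagram $D$ of $K$ that realizes the relevant minimum: for the first inequality, a diagram with exactly $c_r(K)$ classical crossings; for the second, a diagram with exactly $c_{\alpha_i}(K)$ virtual crossings of type $\alpha_i$. Both bounds will then follow by reading the defining formula
\begin{center}
$W_D(t_1, \cdots, t_k)=\sum\limits_{c}w(c)\prod\limits_{i=1}^kt_i^{\operatorname{Ind}_{\alpha_i}(c)}-w(D)$
\end{center}
term by term.

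For the classical crossing bound, we split the sum over classical chords $c$ into two parts. The first part consists of chords whose index vector $(\operatorname{Ind}_{\alpha_1}(c), \cdots, \operatorname{Ind}_{\alpha_k}(c))$ is nonzero. The second part consists of chords with all indices zero, and these contribute only to the constant term, as does $-w(D)$. Hence, for every $(a_1, \cdots, a_k)\neq(0, \cdots, 0)$, the coefficient $m_{a_1\cdots a_k}$ is the signed count $\sum w(c)$ over chords $c$ with index vector $(a_1, \cdots, a_k)$. By the triangle inequality, $|m_{a_1\cdots a_k}|$ is at most the number of such chords. Summing over all nonzero vectors gives at most the total number of classical chords, which is $c_r(K)$. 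This step is routine. The only point to state explicitly is that the constant term is excluded from the bound because it absorbs $-w(D)$.

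For the virtual crossing bound, take any monomial with nonzero coefficient and $(a_1, \cdots, a_k)\neq 0$. It must come from at least one classical chord $c$ with $\operatorname{Ind}_{\alpha_i}(c)=a_i$; otherwise its coefficient would vanish. The constant monomial has $a_i=0$ and contributes nothing to the maximum. Now $|\operatorname{Ind}_{\alpha_i}(c)|=|l_{\alpha_i}(c)-r_{\alpha_i}(c)|\leq l_{\alpha_i}(c)+r_{\alpha_i}(c)$. The right-hand side counts virtual chords of type $\alpha_i$ that cross $c$, so it is at most the total number of virtual chords of type $\alpha_i$. Since these chords are in bijection with the virtual crossings of type $\alpha_i$ in $D$, we get $|a_i|\leq c_{\alpha_i}(K)$. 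Taking the maximum over all nonzero coefficients gives the claim.

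I do not expect a genuine obstacle here. The one subtlety is to invoke invariance (Theorem \ref{theorem3.2}) so that the coefficients can be read off a minimizing diagram. The two minimizations may require different diagrams, and that is harmless because each inequality is proved separately.
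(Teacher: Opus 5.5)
Your proposal is correct and takes the same approach as the paper, which only says the bounds follow directly from the definition. You have filled in the details: computing $W_K$ on a diagram that realizes the relevant minimum (justified by Theorem \ref{theorem3.2}), noting that $-w(D)$ affects only the constant term, and bounding $|\operatorname{Ind}_{\alpha_i}(c)|$ by $l_{\alpha_i}(c)+r_{\alpha_i}(c)$, which is at most the number of type-$\alpha_i$ virtual crossings.
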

\begin{proof}
This follows directly from the definition of the multivariate writhe polynomial.
\end{proof}

Let us take a revisit to Example \ref{example3.4}. Consider the following question proposed in \cite[Problem 6.6]{KMV2026}.

\begin{question}\label{question3.8}
Does the multi-virtual knot $K(n)$ has total crossing number $2n+1$?
\end{question}

Since $W_{K(n)}=t_1^nt_2^{-n}-1$, according to Proposition \ref{proposition3.7}, one observes that $c_r(K(n))\geq1$, $c_{\alpha_1}(K(n))\geq n$ and $c_{\alpha_2}(K(n))\geq n$. We conclude that $c_t(K(n))\geq 2n+1$. Since $2n+1$ total crossing points can be realized by the knot diagram $(n=3)$ depicted in Figure \ref{figure10}, we obtain $c_t(K(n))=2n+1$, which provides an affirmative answer to Question \ref{question3.8}. Similarly, Proposition \ref{proposition3.7} also tells us all the four multi-virtual knot diagrams depicted in Figure \ref{figure9} have minimal total crossing numbers.

Next let us turn to the concordance of virtual knots, which was first studied by Carter, Kamada and Saito in \cite{CKS2002}. Let $K_1\subset\Sigma_{g_1}\times[0, 1]$ and $K_2\subset\Sigma_{g_2}\times[0, 1]$ be two virtual knots embedded in thickened surfaces. We say $K_1$ and $K_2$ are \emph{concordant} if we can find a connected oriented 3-manifold $M$ such that $\partial M=-\Sigma_{g_1}\sqcup\Sigma_{g_2}$ and an annulus $A\subset M\times[0, 1]$ such that $\partial A=r(K_1)\sqcup K_2$. Or equivalently, from the viewpoint of virtual knot diagram, two virtual knots $K_1$ and $K_2$ are \emph{concordant} if they can be connected by a sequence of generalized Reidemeister moves, saddle moves, births and deaths such that the total number of births and deaths is equal to the number of saddle moves. If a virtual knot is concordant to the unknot, then it is called a \emph{slice} virtual knot. For two multi-virtual knots, the concordance can be similarly defined as the existence of a genus zero cobordism between them, see \cite[Section 6]{Kau2025}.

It was proved in \cite{BN2017} that two classical knots are virtually concordant if and only if they are concordant as classical knots. In particular, the writhe polynomial was independently proved to be a concordance invariant of virtual knots in \cite{Kau2018} and \cite{BCG2019}. The proof given by Boden, Chrisman and Gaudreau in \cite{BCG2019} is based on a key lemma proved by Turaev in \cite[Lemma 2.3.2]{Tur2008}, which claims that there exists a particular involution on the set of classical crossings if the given virtual knot is slice. In this paper, we mainly follow the approach used by Kauffman in \cite{Kau2018} to prove the following result.

\begin{proposition}\label{proposition3.9}
The multivariate writhe polynomial is a concordance invariant of multi-virtual knots.
\end{proposition}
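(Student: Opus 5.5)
Following Kauffman's approach in \cite{Kau2018}, the plan is to work diagrammatically: a concordance between multi-virtual knots $K$ and $K'$ is a sequence of classical and multi-virtual Reidemeister moves, saddle moves, births and deaths, with the number of births and deaths equal to the number of saddles. Theorem \ref{theorem3.2} already gives invariance under the Reidemeister moves. Standard rearrangement arguments, which carry over to the multi-virtual setting because they only permute moves that act in disjoint regions of the diagram, let us put all births first, then all saddles, then all deaths. After this reordering the concordance has a middle level that is a multi-virtual \emph{link} diagram $L$. This $L$ is obtained from $K$ by $s$ births followed by $s$ saddles, and likewise from $K'$ after relabelling. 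It therefore suffices to attach to $L$ a quantity that both $K$ and $K'$ determine. Since the reordering turns births, saddles and deaths into a genus-zero cobordism through links, it is enough to show that $W$ does not change across one such cobordism from a knot $K$ to the unlink-plus-knot picture.

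The key step is to define, for a multi-virtual link diagram $L=L_1\cup\cdots\cup L_m$, a link version of $W$ that only counts self-crossings of each component. The label $\operatorname{Ind}_{\alpha_i}(c)$ of a self-crossing $c$ of $L_j$ is computed by the labelling rules of Figure \ref{figure11} along the component $L_j$ alone: we walk from the over-arc of $c$ to its under-arc along $L_j$ and record the net signed number of $\alpha_i$-virtual crossings passed. We also need this count to include the contributions of classical crossings with other components. Concretely, let $\operatorname{Ind}_{\alpha_i}$ be the signed count of type-$\alpha_i$ virtual crossings met on one of the two loops at $c$. We then check that Figure \ref{figure11} makes the labels along each closed component globally consistent. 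The essential point is that a closed component meets each virtual type in a balanced way: for a closed curve in the plane, the signed count of crossings it has with another closed curve is zero. Set
\[
W_L=\sum_j\Big(\sum_{c\in L_j\cap L_j}w(c)\prod_i t_i^{\operatorname{Ind}_{\alpha_i}(c)}-w(L_j)\Big).
\]
We then verify two things. First, $W_L$ is invariant under the Reidemeister moves, by the same case check as in Theorem \ref{theorem3.2}. Second, $W_L$ is unchanged by a saddle move and by a birth or death, given that the total Euler characteristic of the cobordism is $0$.

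The main obstacle is the saddle move. A saddle merges two components, or splits one, and so converts mixed crossings into self-crossings; their indices then acquire contributions from the virtual crossings on the other component. My plan is to avoid a direct computation by using the labelling interpretation. A saddle can be performed only between two arcs carrying the same $k$-tuple label, up to an overall shift that we can choose freely on a newly created component. I would follow Kauffman's trick for the writhe polynomial: when the cobordism has genus zero, the sequence of saddles can be arranged as a tree. Each merging saddle then joins a component whose labels are globally well defined, so the labels extend consistently across the band. Under this extension, every crossing keeps its label $(a_1(c),\dots,a_k(c))$. Crossings between distinct components that become self-crossings after a merge then split back apart later, because the cobordism has genus zero. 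Their signed contributions therefore pair off: their total enters once with $+$ and once with $-$ and cancels, just as the $-w(D)$ normalization cancels $t^0$ terms. We conclude $W_K=W_{K'}$. The delicate point I expect to need care with is exactly this consistency of the $\mathbb{Z}^k$-labelling across bands, since, unlike the one-variable case, the virtual types must be tracked separately. Genus zero is precisely what rules out a band that creates a nontrivial cycle in the labels.
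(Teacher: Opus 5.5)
You follow Kauffman's labelling idea, as the paper does, but you organize the proof differently: you reorder into births, a spanning tree of merging bands, splitting bands and deaths, and track a link-level quantity. The paper instead decomposes into birth--saddle, saddle--death and double-saddle pieces, and uses additivity under connected sum for the last. Your route can work, but two load-bearing steps are wrong as written.

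\textbf{First gap: consistency of the labels.} This does not follow from planar intersection counts. Two closed plane curves have algebraic intersection zero only when you add up the classical crossings \emph{and} the virtual crossings of all types; it fails type by type. Take two circles meeting in two points, one crossing classical and the other virtual of type $\alpha_1$. Their type-$\alpha_1$ signed count is $\pm1$, so the $\alpha_1$-label does not close up on either component. Your index ``on one of the two loops'' at a self-crossing then depends on which loop you take. Also, classical crossings with other components should not enter the count at all, since $\operatorname{Ind}_{\alpha_i}$ only sees type-$\alpha_i$ virtual chords. What is true is the following: the signed count $\ell_i(A,B)$ of type-$\alpha_i$ virtual crossings between distinct components is invariant under all seven moves, is $0$ right after births, and adds under merging. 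So it vanishes throughout the merging half of the cobordism. It can be nonzero after a splitting saddle, so you should run the argument only through births and merges, and treat the other half backwards from $K'$.

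\textbf{Second gap, the serious one: the cancellation at the saddles.} The pairing-off you propose does not happen. In the merging half nothing is ever split back apart. Mixed crossings between $K$ and a newborn unknot $U$, created by the moves that carry $U$ across $K$ before its band is attached, become permanent self-crossings of the intermediate knot. Matching labels across a band only shows that each crossing keeps its label difference. So the change of your self-crossing-only $W_L$ at a merge is exactly the mixed-crossing sum between the two merged components, and nothing in your argument shows that sum vanishes. Moreover, $W_L$ is not invariant under saddles in general (splits change it).

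\textbf{The missing idea: keep the mixed crossings in the bookkeeping.} For a link diagram $L$ with all $\ell_i=0$ and a consistent labelling $\lambda$, put $\Phi(L,\lambda)=\sum_c w(c)\bigl(\prod_i t_i^{a_i(c)}-1\bigr)$. The sum runs over \emph{all} classical crossings, self and mixed, and $a(c)$ is the label of the under-arc minus that of the over-arc. Then:
\begin{enumerate}
\item The same local checks as in Theorem~\ref{theorem3.2} show that $\Phi$ is unchanged by every move when labels are transported.
\item A merging saddle between arcs with equal labels leaves $\Phi$ unchanged, since all crossings and labels persist.
\item Right after the births there are no mixed crossings and the unknots are crossingless, so $\Phi=W_K$ whatever shifts are placed on the unknots.
\item For a knot, $\Phi$ is just the multivariate writhe polynomial.
\end{enumerate}
Because the merging bands form a tree, you can solve for the shifts so that the labels agree at every band. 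This gives $W_K=\Phi=W_{K''}$ for the intermediate knot $K''$, and the same argument on the other half gives $W_{K'}=W_{K''}$. This solvability along a tree is where genus zero really enters (a band closing a cycle is a split, across which labels need not match), not a later pairing-off of contributions.
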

\begin{proof}
The proof is essentially the same as that of \cite[Theorem4.9]{Kau2018}. We only give a sketch of it here.

Assume we are given two concordant multi-virtual knots, then there exists a cobordism of genus zero between them. This cobordism can be decomposed into three kinds of elementary genus zero concordances: a pair of birth with saddle, a pair of saddle with death, and a pair of two saddles, see the rightmost picture in Figure \ref{figure12}.

\begin{figure}[h]
\centering
\includegraphics{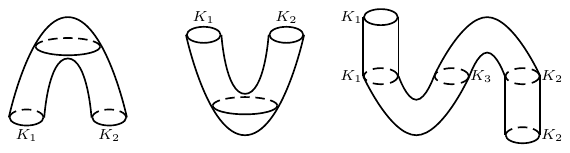}\\
\caption{Elementary genus zero concordance}\label{figure12}
\end{figure}

For the elementary concordance consisting of a birth and a saddle, it is an amalgamation of a multi-virtual knot $K$ and an unknot $U$. Recall the the multivariate writhe polynomial can be defined by arbitrary choosing the label of any one arc of $K$, see the end of subsection \ref{subsection3.2}. Let us label $(0, \cdots, 0)$ to the arc of $U$ and the arc of $K$ where the saddle point move takes place, then the new knot diagram inherits the same labeling as that of $K$. Therefore, the multivariate writhe polynomial is preserved by the pair of birth and saddle. For the pair of saddle and death, the proof is almost the same.

For the rightmost picture which contains two saddle points, in order to show that $W_{K_1}=W_{K_2}$, let us first consider the left picture in Figure \ref{figure12}. Similar to the writhe polynomial, for two multi-virtual knots, it is not difficult to observe from the definition that the multivariate writhe polynomial is additive under the connected sum. Then for the two multi-virtual knots $K_1$ and $K_2$ in the left picture in Figure \ref{figure12}, we have $W_{K_1}+W_{K_2}=W_U=0$. In a similar way, one can show that for the two multi-virtual knots $K_1$ and $K_2$ in the middle of Figure \ref{figure12}, we still have $W_{K_1}+W_{K_2}=W_U=0$. Now let us go back to the right side of Figure \ref{figure12}, one obtains $W_{K_1}=-W_{K_3}=W_{K_2}$. The proof is finished.
\end{proof}

As an example, consider the multi-virtual knot $K_1$ in Figure \ref{figure9}. Since it has nonzero multivariate writhe polynomial, we know it is not slice. On the other hand, it was shown in \cite[Figure 66]{Kau2025} that there exists a cobordism of $K_1$ to the unknot of genus one. Therefore we conclude that it has slice genus one.  

\section{Some further discussions}\label{section4}
\subsection{A map sending multi-virtual knots to multi-virtual knots}
In this subsection, we define a map sending multi-virtual knots with $k$ types of virtual crossings to multi-virtual knots with $k+1$ types of virtual crossings. One motivation to propose this map is to give an answer to \cite[Problem 3.2]{KMV2026}.

Let $K$ be a multi-virtual knot and $D$ a knot diagram of it. We define $M(D)$ to be the multi-virtual knot diagram obtained from $D$ by replacing all classical crossings with virtual crossings of type $\alpha_0$. The following lemma tells us that the map $M$ is well-defined, therefore we can use $M(K)$ to denote the image of $K$.

\begin{lemma}\label{lemma4.1}
The multi-virtual knot represented by $M(D)$ does not depend on the choice of $D$.
\end{lemma}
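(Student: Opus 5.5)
The plan is to show that if $D$ and $D'$ are two diagrams of $K$, then $M(D)$ and $M(D')$ are equivalent multi-virtual knot diagrams, now with types $\mathcal{T}\cup\{\alpha_0\}$. Since $D$ and $D'$ differ by a finite sequence of the moves $\Omega_1,\Omega_2,\Omega_3$ and $\Omega^v_1,\Omega^v_2,\Omega^v_3,\Omega^{cv}_3$, it suffices to treat a single move. The idea is that applying $M$ to each move yields a move between $M$-images that is itself a composition of multi-virtual Reidemeister moves in the larger type set.

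\emph{Classical moves.} Under $M$, every classical crossing appearing in $\Omega_1,\Omega_2,\Omega_3$ becomes a virtual crossing of type $\alpha_0$. Crossing information disappears, so orientations and over/under data no longer matter.
\begin{itemize}
\item The image of $\Omega_1$ is $\Omega^v_1$ for type $\alpha_0$.
\item The image of $\Omega_2$ is $\Omega^v_2$ for type $\alpha_0$. This holds for either choice of which strand lies over, since both images are the same diagram.
\item The image of $\Omega_3$ is $\Omega^v_3$ with all three crossings of type $\alpha_0$, i.e.\ the purely virtual third move for a single type.
\end{itemize}
I would use the equivalence of \cite[Proposition 2.1]{KMV2026} between the multi-virtual Reidemeister moves and the detour move. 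In that language, the three images are simply detour moves of an arc carrying only $\alpha_0$-crossings across a tangle.

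\emph{Multi-virtual moves.} The moves $\Omega^v_1,\Omega^v_2$ involve no classical crossings, and neither does $\Omega^v_3$ in any of its type configurations, whether all crossings share one type or a strand of one type passes a crossing of another. Hence $M$ leaves them literally unchanged, and they remain legal moves in the larger type set. The move $\Omega^{cv}_3$ involves one classical crossing $c$ and two virtual crossings of a type $\alpha_i$ on a strand passing over $c$. Under $M$ it becomes a configuration with one $\alpha_0$-crossing and two $\alpha_i$-crossings, with $i\neq 0$. I would recognise this as an instance of $\Omega^v_3$ of mixed type: a strand carrying two $\alpha_i$-crossings is slid past an $\alpha_0$-crossing. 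Equivalently, it is a detour move of the $\alpha_i$-strand through the one-crossing tangle formed by the $\alpha_0$-crossing, which is allowed because the moving strand carries crossings of a single type $\alpha_i$.

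\emph{Main obstacle.} The main difficulty is exactly this last case. One must check that the mixed $\Omega^v_3$ in Figure~\ref{figure4} (or the detour move) permits a strand of type $\alpha_i$ to pass over a crossing of a different type $\alpha_0$. Otherwise $\Omega^{cv}_3$ would not map to a legal move. I expect this to be fine, because the detour move allows the tangle to contain arbitrary crossings and requires only that the moving strand's crossings share one type. Given that, composing these single-move equivalences along the sequence from $D$ to $D'$ proves that $M(K)$ is well defined.
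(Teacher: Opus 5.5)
Your proposal is correct and matches the paper's argument: $\Omega_1,\Omega_2,\Omega_3$ become $\Omega^v_1,\Omega^v_2,\Omega^v_3$ of the single type $\alpha_0$, the purely virtual moves are left untouched, and $\Omega^{cv}_3$ becomes the mixed-type $\Omega^v_3$ in which the crossed-over crossing has type $\alpha_0$ (the paper's case $j=0$). The point you flag as the main obstacle is not a problem, since the mixed-type $\Omega^v_3$ (two crossings of type $\alpha_i$ and one of type $\alpha_j$, with $i\neq j$ allowed) is one of the moves listed in Figure~\ref{figure4}.
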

\begin{proof}
It suffices to notice that after replacing all classical crossings with virtual crossings of type $\alpha_0$, classical Reidemeister moves $\Omega_1, \Omega_2$ and $\Omega_3$ become $\Omega_1^v, \Omega_2^v$ and $\Omega_3^v$ $(i=j=0)$ respectively, and multi-virtual Reidemeister move $\Omega_3^{cv}$ becomes $\Omega_3^v$ $(j=0)$.
\end{proof}

Notice that $M(K)$ is a multi-virtual knot without classical crossing points. In particular, if $K$ is a classical knot, then $M(K)$ is trivial, since all crossings are virtual crossings of the same type. If any multi-virtual knot with no classical crossings is trivial, then this map has no meaningful interpretation at all. In what follows we will introduce a sequence of integer-valued invariants to demonstrate that this is not the case.

Let $D$ be a multi-virtual knot diagram, consider the Gauss diagram $G(D)$ that contains all the virtual chords. Let us use $V_i\cap V_j$ to denote the set of all the intersection points between virtual chords of type $\alpha_i$ and virtual chords of type $\alpha_j$. For any $p\in V_i\cap V_j$, we set $w(p)=+1$ if at $p$, the orientation induced by type $\alpha_i$ virtual chord and type $\alpha_j$ virtual chord coincides with the canonical orientation of the plane. Otherwise, we set $w(p)=-1$. Now we define the \emph{intersection number}
\begin{center}
$I_{ij}=\sum\limits_{p\in V_i\cap V_j}w(p)$,
\end{center}
here $1\leq i, j\leq k$. Since $I_{ji}=-I_{ij}$, we always assume $i<j$.

\begin{proposition}\label{proposition4.2}
The intersection number $I_{ij}$ $(1\leq i<j\leq k)$ is a multi-virtual knot invariant.
\end{proposition}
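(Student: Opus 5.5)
The plan is to check invariance of $I_{ij}$ under each of the seven moves $\Omega_1,\Omega_2,\Omega_3,\Omega^v_1,\Omega^v_2,\Omega^v_3,\Omega^{cv}_3$, as in the proof of Theorem \ref{theorem3.2}. The point is to work with the Gauss diagram $G(D)$ equipped with all virtual chords. Since $I_{ij}$ only counts intersections between virtual chords of types $\alpha_i$ and $\alpha_j$, the classical chords play no direct role. A classical move can change the positions of the endpoints of virtual chords only through the rearrangement of the arcs involved, so the main work concerns the local picture near each move.

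It is convenient to rewrite $I_{ij}$ in a form parallel to the chord index. For a virtual chord $v$ of type $\alpha_j$, set $\operatorname{Ind}_{\alpha_i}(v)=l_{\alpha_i}(v)-r_{\alpha_i}(v)$, exactly as for classical chords. Then, up to a global sign convention,
\begin{center}
$I_{ij}=\sum\limits_{v\ \text{of type}\ \alpha_j}\operatorname{Ind}_{\alpha_i}(v)$.
\end{center}
The signed count $w(p)$ of a crossing point of two chords depends only on the cyclic order of their four endpoints and their orientations. This is the same data that determines whether a chord crosses another from left to right.

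The moves are then checked one at a time.

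\emph{$\Omega_1$, $\Omega_2$, $\Omega_3$.} These moves do not create or destroy virtual chords. They only insert or delete classical chord endpoints, or permute endpoints inside small arcs that contain no virtual endpoints. For $\Omega_3$, the three small arcs contain only classical endpoints. Hence the cyclic order of virtual endpoints is unchanged, and $I_{ij}$ is preserved.

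\emph{$\Omega^v_1$.} The new virtual chord is isolated, so it intersects nothing.

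\emph{$\Omega^v_2$.} The two new chords have the same type and opposite orientations, as noted in the proof of Theorem \ref{theorem3.2}. Any chord of another type crosses both or neither, and the two contributions cancel. If the pair has type $\alpha_i$ or $\alpha_j$, their mutual intersection (if any) lies in $V_i\cap V_i$ and is not counted.

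\emph{$\Omega^{cv}_3$.} This move exchanges the positions of a classical endpoint with virtual endpoints along short arcs. The virtual chords keep their relative cyclic order, so intersections among virtual chords do not change; only intersections with the classical chord change, and those are not counted.

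\emph{$\Omega^v_3$.} This move involves two types, say $\alpha_p,\alpha_p,\alpha_q$. It is the main case, and it is the mirror of the $\Omega_3$ argument in Theorem \ref{theorem3.2}. The three involved virtual chords have endpoints on three short arcs, and the move reverses the order of endpoints on each arc.
\begin{itemize}
\item Every other virtual chord has its endpoints outside these arcs. So its algebraic intersection with each of the three involved chords is unchanged, exactly as observed for $\Omega_3$.
\item The mutual intersections among the three involved chords do change. The one between the two $\alpha_p$ chords is never counted.
\item The two intersections between an $\alpha_p$ chord and the $\alpha_q$ chord need a genuine check. Going from the diagram before the move to the one after, these pairs either stay intersecting with the same sign, or all become non-intersecting and re-intersect consistently.
\end{itemize}

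This last sub-check is the expected obstacle. The plan is to argue topologically rather than by enumerating cases. Replace every virtual crossing by a positive classical crossing; this turns the diagram into a classical planar diagram. Then each pair of strands in the move $\Omega^v_3$ crosses exactly once both before and after the move, with the same sign, because it becomes an honest $\Omega_3$ move. Hence the pairwise crossing data between the chords of the three strands is unchanged, which gives invariance of $I_{ij}$ under $\Omega^v_3$.

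Combining the seven cases shows that $I_{ij}$ is a multi-virtual knot invariant.
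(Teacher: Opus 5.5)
Your move-by-move structure is the same as the paper's, and the cases $\Omega_1,\Omega_2,\Omega_3,\Omega^v_1,\Omega^v_2$ are fine. The argument for $\Omega^v_3$, however, does not work, and that is the only case with real content.

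Knowing that each pair of strands crosses exactly once, with the same sign, before and after an honest $\Omega_3$ move tells you only that the three chords $X,Y,Z$ persist with the same orientations. It says nothing about whether two of these chords intersect \emph{each other} in $G(D)$, and that is what $I_{ij}$ counts. Those mutual intersections always change. Any two of the three chords share exactly one of the three short arcs, and their remaining endpoints lie on the other two arcs. The move reverses the order on the shared arc, so every pair toggles between intersecting and disjoint. In particular, your alternative ``stay intersecting with the same sign'' never occurs.

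A quick test shows your reasoning is missing an essential input: it never uses that two of the three chords have the same type. If the three chords had pairwise distinct types $\alpha_p,\alpha_q,\alpha_r$, the unique $(\alpha_p,\alpha_q)$-pair among them would toggle, and $I_{pq}$ would change by $\pm1$.

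The same slip appears in $\Omega^{cv}_3$. The two virtual endpoints on the sliding strand \emph{are} interchanged, so the two virtual chords toggle. Your conclusion survives only because these chords have the same type, so their intersection lies in $V_i\cap V_i$ and is not counted.

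What must be shown is this: with types $\alpha_p,\alpha_p,\alpha_q$ on $Y,Z,X$, the signed count of the two toggling pairs $\{Y,X\}$ and $\{Z,X\}$ is preserved. Either one intersection is traded for the other with equal sign, or two of opposite sign are created or destroyed. The paper does this by inspecting the Gauss diagrams of Figure~\ref{figure7}, where these chords contribute $-1$ on both sides.

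Your idea of turning virtual crossings into positive classical crossings can do this without cases, but it has to go through the index rather than through crossings of strands:
\begin{enumerate}
\item In the resulting classical diagram, $\operatorname{Ind}(X)=0$ both before and after the move.
\item Chords other than $X,Y,Z$ have their endpoints off the three arcs, so their contributions to $\operatorname{Ind}(X)$ are unchanged.
\item Hence the total contribution of $Y$ and $Z$ to $\operatorname{Ind}(X)$ is unchanged.
\end{enumerate}
Since $Y$ and $Z$ are positive chords of the same type $\alpha_p$, that total equals, up to a global sign, the contribution of $X,Y,Z$ to $I_{pq}$. The remaining intersection $Y\cap Z$ lies in $V_p\cap V_p$ and does not count. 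This is exactly where the type hypothesis of $\Omega^v_3$ enters.
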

\begin{proof}
First we notice that $\Omega_1, \Omega_2, \Omega_3$ involve only classical crossings, thus they make no contribution to $I_{ij}$. The multi-virtual Reidemeister move $\Omega_1^v$ involves only an isolated virtual chord, which obviously preserves $I_{ij}$. For $\Omega_2^v$, see Figure \ref{figure6}, since the two involved virtual chords have opposite orientations, their respective contributions to $I_{ij}$ exactly cancel each other out. For $\Omega_3^v$, the three virtual chords involved in both sides of Figure \ref{figure7} contribute $-1$ to $I_{ij}$. If at least one of the virtual chord involved in $\Omega_3^v$ is neither of type $\alpha_i$ nor $\alpha_j$, then it is evident that $I_{ij}$ is unchanged. For the last move $\Omega_3^{cv}$, see Figure \ref{figure8}, one easily finds that $I_{ij}$ is also preserved.
\end{proof}

\begin{example}\label{example4.3}
Let us take a revisit to the multi-virtual knot $K_1$ discussed in Example \ref{example3.3}. Consider $M(K_1)$, which is a multi-virtual trefoil knot with three virtual crossings of distinct types. Direct calculations shows that $I_{01}=I_{12}=-1$ and $I_{02}=1$. Thus $M(K_1)$ is nontrivial. This provides a negative answer to \cite[Problem 3.2]{KMV2026}.
\end{example}

\begin{example}
Consider the multi-virtual knot $K(n)$ given in Example \ref{example3.4}. For $M(K(n))$, we have $I_{01}=I_{12}=-n$ and $I_{02}=n$.
\end{example}

Obviously, for any $1\leq i<j\leq k$, the intersection number $I_{ij}$ of $K$ coincides with that of $M(K)$. This explains the reason why in the two examples above we concern the intersection numbers of $M(K)$ rather than that of $K$, since $i$ can be chosen to be 0 in this case.

\subsection{The characterization of the multivariate writhe polynomial}
We first recall the following characterization of the writhe polynomial.

\begin{theorem}[\cites{ST2014,CFGMX2020}]
A polynomial $f(t)\in\mathbb{Z}[t, t^{-1}]$ can be realized as the writhe polynomial of some virtual knot if and only if $f(1)=0$ and $f'(1)=0$.
\end{theorem}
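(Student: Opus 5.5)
Necessity and sufficiency are handled separately.

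\emph{Necessity.} I would use the chord-index formulation $W_K(t)=\sum_c w(c)t^{\operatorname{Ind}(c)}-w(K)$. Evaluating at $t=1$ gives $f(1)=\sum_c w(c)-w(K)=0$ immediately.

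For the derivative, $f'(1)=\sum_c w(c)\operatorname{Ind}(c)$. Write $\operatorname{Ind}(c)=\sum_{d} w(d)\varepsilon(d,c)$, where $\varepsilon(d,c)=+1$ if $d$ crosses $c$ from left to right, $-1$ if from right to left, and $0$ if they do not cross. Then
\begin{center}
$f'(1)=\sum_{c,d}w(c)w(d)\varepsilon(d,c)$.
\end{center}
The key observation is antisymmetry: if $d$ crosses $c$ from left to right, then $c$ crosses $d$ from right to left, so $\varepsilon(c,d)=-\varepsilon(d,c)$. Hence the double sum cancels in pairs and $f'(1)=0$. The only thing to check carefully is this antisymmetry with the orientation conventions of Figure \ref{figure3}.

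\emph{Sufficiency.} A polynomial with $f(1)=f'(1)=0$ can be written as $f(t)=\sum_{n\neq 0,1}a_n(t^n-nt+n-1)$. Indeed, subtracting $\sum_{n\ne 0,1}a_n(t^n-nt+n-1)$ from $f$ leaves a polynomial of the form $b_1t+b_0$ with $b_1+b_0=0$ and $b_1=0$, hence zero. Since $W$ is additive under connected sum (as observed in the proof of Proposition \ref{proposition3.9}), and since mirror images negate coefficients via Proposition \ref{proposition3.6} (together with the reverse, to keep exponents unchanged), it suffices to realize $\pm(t^n-nt+n-1)$ for each $n\neq0,1$.

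For this I would construct explicit Gauss diagrams. A first attempt would be a positive chord $c$ of index $n$, together with $|n|$ negative chords of index $1$. Since each chord crossing $c$ contributes to $\operatorname{Ind}(c)$, the compensation forces at least some crossing chords, and I expect the family of twist-type virtual knots used in \cite{ST2014} to do this. Every Gauss diagram is realizable by a virtual knot, so it only remains to verify the index computations.

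\textbf{Main obstacle.} The hard step is designing these building-block diagrams so that the chord indices come out exactly right. The cases $n<0$ and $n=2$ need separate treatment. If the direct design becomes messy, I would instead cite the explicit realization families of \cite{ST2014}, which give $t^n-nt+n-1$ directly.
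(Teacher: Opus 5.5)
The paper gives no proof of this statement; it quotes it from \cite{ST2014} and \cite{CFGMX2020}. Your necessity argument is complete and is the standard one. $f(1)=0$ is trivial, and $f'(1)=\sum_{c,d}w(c)w(d)\varepsilon(d,c)=0$ follows from the antisymmetry $\varepsilon(c,d)=-\varepsilon(d,c)$, which holds whenever left/right mean the sides of an oriented chord. Your reduction of sufficiency to the blocks $\pm(t^n-nt+n-1)$, $n\neq0,1$, is also correct: it uses additivity of $W$ under any choice of connected sum and the identity $W_{r(m(K))}(t)=-W_K(t)$.

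\textbf{The gap: sufficiency is never proved.} Producing a virtual knot with $W_K(t)=t^n-nt+n-1$ is the entire content of the realization direction. You only \emph{expect} some twist family to do it, and your fallback is to cite \cite{ST2014}. That paper is the source of the theorem itself, so as a proof this is circular.

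The one concrete design you propose is also wrong for $n<0$. There the correction term is $-n(t-1)=|n|(t-1)$, so the $|n|$ auxiliary chords of index $1$ must be \emph{positive}, not negative. You also do not say how those chords sit relative to $c$ and to one another, and that arrangement is exactly what determines all the indices.

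\textbf{The fix.} The missing step is a short Gauss diagram computation that works uniformly in $n$; neither $n<0$ nor $n=2$ needs separate treatment. Take a positive chord $c$ and $|n|$ pairwise disjoint chords $d_1,\dots,d_{|n|}$: draw $c$ as a diameter and the $d_i$ as parallel chords perpendicular to it. Let each $d_i$ cross $c$ from right to left and have sign $-\operatorname{sgn}(n)$. Then $\operatorname{Ind}(c)=\sum_i w(d_i)\varepsilon(d_i,c)=n$. By your antisymmetry, and since the $d_i$ do not meet each other, $\operatorname{Ind}(d_i)=w(c)\varepsilon(c,d_i)=1$. Hence
\[
W(t)=(t^n-1)-\operatorname{sgn}(n)\,|n|\,(t-1)=t^n-nt+n-1 .
\]
Every Gauss diagram is realized by a virtual knot diagram, so this supplies the building blocks, and your reduction then finishes the proof.
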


It follows immediately that a polynomial $f(t_1, \cdots, t_k)\in\mathbb{Z}[t_1^{\pm1}, \cdots, t_k^{\pm1}]$ can be realized as the multivariate writhe polynomial of some multi-virtual knots only if $f(t, \cdots, t)=0$ and $f'(t, \cdots, t)=0$ when $t=1$. An interesting question is, whether this condition is also sufficient?

\subsection{Multivariate writhe polynomial and intersection graph}
We know that the Gauss diagram completely determines the virtual knot. For a given Gauss diagram, there is a corresponding intersection graph such that each chord corresponds to a vertex and an intersection point between two chords corresponds an edge. The following theorem reveals the connection between the writhe polynomial and the intersection graph.

\begin{theorem}[\cite{Che2025}]\label{theorem4.6}
Two virtual knots have the same writhe polynomial if and only if they have equivalent intersection graphs.
\end{theorem}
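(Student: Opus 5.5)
The plan is to fix a precise model for the intersection graph and then prove the two implications separately. For a Gauss diagram $G(D)$, let $\Gamma(D)$ have one vertex for each classical chord $c$, labelled by its sign $w(c)$. Each intersection point of two chords gives a directed edge, oriented according to whether the second chord crosses the first from left to right or from right to left. With this convention the chord index $\operatorname{Ind}(c)=r_+(c)-r_-(c)-l_+(c)+l_-(c)$ is a signed out-degree minus in-degree at the vertex $c$, so it is read off from $\Gamma(D)$ alone.

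I would take ``equivalent'' to mean the equivalence relation on such labelled directed graphs generated by the graph moves induced by $\Omega_1,\Omega_2,\Omega_3$: adding or removing an isolated vertex; adding or removing a pair of oppositely signed twin vertices with identical neighbourhoods; and the triangle move. One should also allow changes of the Gauss diagram that leave $\Gamma$ fixed. The first step is to list these moves explicitly and check that each one preserves the multiset $\{(w(c),\operatorname{Ind}(c))\}$ modulo cancelling pairs $(+,n),(-,n)$, and modulo adding or removing index-zero vertices, which are absorbed by the $-w(D)$ term. This is the same bookkeeping as in the proof of Theorem~\ref{theorem3.2}, now phrased on graphs. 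It gives the implication ``equivalent graphs $\Rightarrow$ equal $W$''.

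For the converse, I would construct a normal form. Given $W_K(t)=\sum a_n t^n$, with $W_K(1)=0$ and $W_K'(1)=0$ by the characterization theorem of \cites{ST2014,CFGMX2020}, I would write down a standard graph $\Gamma_W$. It is a disjoint union of standard blocks, where each block realizes a single term $\pm t^n$ together with compensating index-zero or index-$\pm1$ vertices, as in the realization half of that theorem. The goal is then to show that every $\Gamma(D)$ is equivalent to $\Gamma_{W_D}$. I would argue by induction on the number of vertices. First, use $\Omega_2$-type twin insertions to \emph{detach} a chosen vertex $c$ of index $n$: insert a cancelling twin pair next to each neighbour of $c$ so that $c$'s adjacency is transferred to a fresh block, leaving a graph in which $c$ sits inside a standard block and the remainder has one fewer genuine vertex. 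Second, merge blocks carrying the same index with opposite signs by an $\Omega_2$-type cancellation.

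The main obstacle will be the detaching step. Twin insertions must be realizable by actual Gauss diagram moves, and the edge orientations have to be tracked so that the indices of all other vertices are unchanged. I would first try it on the case where $c$ has a single neighbour, using a local $\Omega_3$ to slide the intersection off $c$, and then iterate, checking that the sum of index changes vanishes, which is exactly where $W'(1)=0$ enters. If that local slide fails in some orientation case, the fallback is to enlarge the generating moves by the ``index-preserving'' graph move that swaps two adjacent intersections, and to verify directly that it preserves $W$.
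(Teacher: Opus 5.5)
The paper only quotes this theorem from the literature and gives no proof, so your plan has to stand on its own. It does so only for the easy direction: $W$ is read off the signed directed graph, and each move induced by $\Omega_1,\Omega_2,\Omega_3$ preserves it. The converse is the actual content of the theorem, and it is not proved.

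The steps you propose for the converse fail as stated:
\begin{itemize}
\item An $\Omega_2$-type move only inserts or deletes a pair of opposite-sign \emph{twins}, i.e.\ vertices with identical arcs to every other vertex. It never deletes or redirects an arc between two surviving vertices. So twin insertions alone cannot transfer the adjacency of $c$ to a fresh block.
\item For the same reason, two vertices in different blocks with equal index and opposite signs cannot be cancelled by an $\Omega_2$-type move, because they are not twins. Making them twins is exactly what has to be shown.
\item Your appeal to $W(1)=0$ and $W'(1)=0$ carries no information. Both hold for every signed directed graph: $W'(1)=\sum_c w(c)\operatorname{Ind}(c)$, and an edge between $c$ and $d$ contributes to the terms of $c$ and of $d$ with opposite signs.
\item The fallback of adjoining an ``index-preserving swap'' changes the equivalence relation in the statement, and adjacency of intersections is not even defined on an abstract graph. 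Unless you show the new move is a composite of the induced moves, you would be proving a different, essentially tautological, statement.
\end{itemize}

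The missing idea is a rewiring lemma based on the graph form of $\Omega_3$, which is the only move that changes adjacency among surviving vertices. Read off the Gauss diagram, the all-positive braid-type $\Omega_3$ says: a directed path $p\to q\to r$ with $p,r$ non-adjacent may be replaced by the single arc $p\to r$, with all other arcs (including the remaining ones at $q$) unchanged. The version of $\Omega_3$ that deletes three edges applies only to a transitive triangle whose middle vertex has the opposite sign.

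The trefoil shows what such a lemma must achieve. Its graph is a positive directed $3$-cycle $a\to b\to c\to a$ with all indices $0$. With $\Omega_3$ read literally, no move other than an insertion applies to it. Yet it becomes empty after inserting twins $u^{\pm}$ with arcs $a\to u^{\pm}$ and then contracting $c\to a\to u^{+}$, $b\to c\to u^{+}$, $a\to b\to u^{+}$ in turn. The last two contractions isolate $c$ and $b$, and at the end $u^{+},u^{-}$ are twins hanging off $a$.

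This example shows two things. Reductions must pass through larger graphs, and a vertex of nonzero index can never be isolated. So your induction on the number of vertices needs a different measure.

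To finish, you would have to prove a general version of this sliding procedure, covering the mixed-sign variants of $\Omega_3$. It must show that every graph reduces to a canonical disjoint union of signed stars whose multiplicities are read off from $W$. Such stars exist: a star whose centre has index $n$ and whose $|n|$ leaves have index $1$ realizes $\pm(t^n-nt+n-1)$, and these span all $W$ with $W(1)=W'(1)=0$.
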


In other words, the writhe polynomial precisely captures the information encoded in the intersection graph from the Gauss diagram. A natural question is, what information does the multivariate writhe polynomial extract from the Gauss diagram? Of course, here the Gauss diagram should include all the virtual chords, otherwise it would contain too little information.

Suppose we are given a multi-virtual knot diagram $D$ and the corresponding Gauss diagram $G(D)$. The \emph{intersection graph} $I(D)$ is defined as follows. First, each chord of $G(D)$ corresponds to a vertex of $I(D)$. If this chord is a virtual one of type $\alpha_i$, then the corresponding vertex is labelled with $\alpha_i$. For a classical chord, we assign the sign of it to the corresponding vertex. Second, if two chords, say $c_1$ and $c_2$, have nonempty intersection, and the orientation induced from $(c_1, c_2)$ coincides with the canonical orientation of the plane, then we add an arrow directed from the vertex corresponding to $c_1$ to the vertex corresponding to $c_2$. In particular, if a vertex corresponds to a virtual chord of type $\alpha_i$, then we say this vertex is a virtual vertex of type $\alpha_i$. Otherwise, we say it is classical. The following Figure \ref{figure13} illustrates a simple example.

\begin{figure}[h]
\centering
\includegraphics{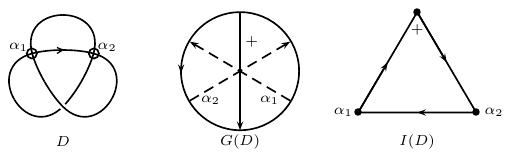}\\
\caption{A multi-virtual knot diagram, its Gauss diagram and intersection graph}\label{figure13}
\end{figure}

Choose a classical vertex $c$ of $I(D)$, it is easy to find that 
\begin{enumerate}
\item $\operatorname{Ind}_{\alpha_i}(c)$ is equal to the number of arrows pointing from virtual vertices of type $\alpha_i$ to $c$ minus the number of arrows pointing from $c$ to virtual vertices of type $\alpha_i$;
\item $I_{ij}$ is equal to the number of arrows pointing from virtual vertices of type $\alpha_i$ to virtual vertices of type $\alpha_j$ minus the number of arrows pointing from virtual vertices of type $\alpha_j$ to virtual vertices of type $\alpha_i$.
\end{enumerate}
As a consequence, the multivariate writhe polynomial $W_K(t_1, \cdots, t_k)$ and intersection numbers $I_{ij}(K)$ $(1\leq i<j\leq k)$ are all completely determined by the intersection graph. Since the intersection numbers contain some information that cannot be obtained from the multivariate writhe polynomial (see Example \ref{example4.3}), the result of Theorem \ref{theorem4.6} does not hold if we replace the writhe polynomial with multivariate writhe polynomial. We end this paper with the following problem: figure out what information of a multi-virtual knot is encoded in its intersection graph. Note that the intersection graph $I(D)$ indeed depends on the choice of the diagram $D$, in order to solve this problem, one needs to consider the equivalence classes of intersection graphs modulo the deformations derived from the classical Reidemeister moves and multi-virtual Reidemeister moves.

\section*{Acknowledgements}
Zhiyun Cheng was supported by the NSFC grant 12371065.

\end{document}